\DeclareMathAlphabet{\mathcalligra}{T1}{calligra}{m}{n}
\DeclareFontShape{T1}{calligra}{m}{n}{<->s*[1.5]callig15}{}
\newtheorem{theorem}{Theorem}[section]
\newtheorem{lemma}[theorem]{Lemma}
\newtheorem*{questionstar}{Question}
\theoremstyle{definition}
\newtheorem{definition}[theorem]{Definition}
\newtheorem{remark}[theorem]{Remark}
\newtheorem{theorem-definition}[theorem]{Theorem-Definition}
\numberwithin{equation}{section}
\newcommand{\LL} {\mathbb{L}}
\newcommand{\PP} {\mathbb{P}}
\newcommand{\RR} {\mathbb{R}}
\newcommand{\ZZ} {\mathbb{Z}}
\newcommand {\shA} {\mathcal{A}}
\newcommand {\shE} {\mathcal{E}}
\newcommand {\shH} {\mathcal{H}}
\newcommand {\shT} {\mathcal{T}}
\newcommand {\shP} {\mathcal{P}}
\newcommand {\sC} {\mathscr{C}}
\newcommand {\sE} {\mathscr{E}}
\newcommand {\sI} {\mathscr{I}}
\newcommand {\sL} {\mathscr{L}}
\newcommand {\sN} {\mathscr{N}}
\newcommand {\sO} {\mathscr{O}}
\newcommand {\sU} {\mathscr{U}}
\newcommand {\foR} {\mathfrak{R}}
\newcommand {\coh} {\operatorname{coh}}
\newcommand{\sExt}{\mathscr{E} \kern -1pt xt}
\newcommand {\Hom} {\operatorname{Hom}}
\newcommand {\sHom}{\mathscr{H}\kern-5pt\mathcalligra{om}}
\newcommand {\kk} {\Bbbk}
\newcommand {\Proj} {\operatorname{Proj}}
\newcommand {\rank} {\operatorname{rank}}
\newcommand {\Sing} {\operatorname{Sing}}
\newcommand {\Sym} {\operatorname{Sym}}
\newcommand{\sTor}{\mathscr{T} \kern -3pt or}
\newcommand {\Bl} {\operatorname{Bl}}
\newcommand {\tX} {Z}
\newcommand {\bL} {\mathbf{L}}
\newcommand {\bR} {\mathbf{R}}
\newcommand{\hpd}{{\natural}}
\title[]{Derived category of projectivization and generalized linear duality}
\author[Q.Y.\ JIANG]{Qingyuan Jiang}
\address{Institute for Advanced Study, Einstein Drive, Princeton, NJ 08540, USA}\email{jiangqy@ias.edu}
\begin{document}

\begin{abstract} In this note, we generalize the linear duality between vector subbundles (or equivalently quotient bundles) of dual vector bundles to coherent quotients $V \twoheadrightarrow \mathscr{L}$ considered in \cite{JL17}, in the framework of Kuznetsov's homological projective duality (HPD). 
As an application, we obtain a generalized version of the fundamental theorem of HPD for the $\mathbb{P}(\mathscr{L})$--sections and the respective dual sections of a given HPD pair.
\vspace{-2mm} 
\end{abstract}

\maketitle

\section{Introduction}
Let $S$ be a fixed scheme, which for simplicity we assume to be smooth over an algebraically closed field $\kk$ of characteristic zero, and $V$ be a vector bundle of rank $N \ge 2$ over $S$. Denote $V^\vee : = \sHom(V,\sO_S)$ the dual vector bundle. For a short exact sequence of vector bundles
	$$0 \to K \to V \to L \to 0,$$
over $S$, there is a dual short exact sequence of vector bundles
	\begin{equation} \label{eqn:dualL}
	0 \to L^\vee \to V^\vee \to K^\vee \to 0.
	\end{equation}
The {\em linear duality} refers to the duality of subbundles $\{K\subset V\} \leftrightarrow \{L^\vee \subset V^\vee\}$, or equivalently the quotient bundles $\{ V \twoheadrightarrow L\} \leftrightarrow \{ V^\vee \twoheadrightarrow K^\vee\}$. If we use Grothendieck convention $\PP(\sE): =  \Proj_S \Sym_{\sO_S}^\bullet \sE$ for a coherent sheaf $\sE$ on $S$, then linear duality equivalently refers to the reflexive relationship between all projective linear subbundles of $\PP(V)$ and $\PP(V^\vee)$:
	$$\{ \PP(L) \subset \PP(V) \} \quad \longleftrightarrow \quad \{ \PP(L)^\perp: = \PP(K^\vee) \subset \PP(V^\vee)\}.$$ 	
\begin{questionstar} What should be the {\em dual} of coherent quotient sheaves $\{V \twoheadrightarrow \sL\}$, or equivalently the subschemes $\{\PP(\sL) \subset \PP(V)\}$, where $\sL := {\rm coker}(K \to V)$ is {\em not} necessarily locally free?
\end{questionstar}

In this case we still have a short exact sequence of $\sO_S$-modules $0 \to K \to V \to \sL \to 0$; however the sequence (\ref{eqn:dualL}) is now replaced by a four-term exact sequence:
		\begin{equation} \label{eqn:dualsL}
		0 \to \sL^\vee \to V^\vee \to K^\vee \to \sExt^1_S(\sL,\sO_S) \to 0,
		\end{equation}
where $\sL^\vee: = \sHom_{S}(\sL,\sO_S)$, 
and $\sExt^1_S(\sL,\sO_S)$ 
is supported on the singular locus  
	${\rm Sing}(\sL) : = \{s \in S \mid \rank \sL(s) > \ell \} \subset S$
of $\sL$, where $\ell$ is the generic rank of $\sL$. 

In this note, we answer the above question in the framework homological projective duality:

\begin{theorem}[See Thm. \ref{thm:duality}] The homological projective dual (HPD) of $\PP(\sL) \subset \PP(V)$ is given by $\widetilde{\PP}(K^\vee) \to \PP(V^\vee)$, the blowing up of $\PP(K^\vee)$ along the $\PP(\sExt^1_S(\sL,\sO_S)) \subset \PP(K^\vee)$. 
\end{theorem}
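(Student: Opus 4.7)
The plan is to identify the universal hyperplane section of $\PP(\sL)\subset\PP(V)$ over $Y := \PP(V^\vee)$ explicitly, to relate it to $\widetilde{\PP}(K^\vee)$, and then to extract the HPD category via the derived projectivization formula of \cite{JL17}. First, let $\sO_Y(-1)\hookrightarrow V_Y$ be the tautological subbundle on $Y$; pulling back $V\twoheadrightarrow\sL$ and composing gives a canonical map $\alpha\colon\sO_Y(-1)\to\sL_Y$. The universal hyperplane $\shH(\PP(\sL)) := \PP(\sL)\times_{\PP(V)}Q$ is then the zero locus on $\PP(\sL)\times_S Y$ of the section of $\sO_{\PP(\sL)}(1)\boxtimes\sO_Y(1)$ determined by $\alpha$; equivalently, $\shH(\PP(\sL))\to Y$ is the (derived) projectivization of the two-term complex $[\sO_Y(-1)\to\sL_Y]$ on $Y$, which reduces to the classical $\PP_Y(\operatorname{coker}\alpha)$ wherever $\alpha$ is injective on fibres.

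Next I would locate the blow-up inside this picture. Dualising $\alpha$ yields $\sL_Y^\vee\to\sO_Y(1)$, and fitting this into the $Y$-pullback of the four-term sequence \eqref{eqn:dualsL} shows that the partial surjection $V^\vee\twoheadrightarrow V^\vee/\sL^\vee\hookrightarrow K^\vee$ defines a rational map $\PP(K^\vee)\dashrightarrow Y$ whose indeterminacy locus is precisely $\PP(E)$, where $E := \sExt^1_S(\sL,\sO_S)$. Resolving this rational map by the universal property of blowing up produces a morphism
\[
\widetilde{\PP}(K^\vee) = \Bl_{\PP(E)}\PP(K^\vee) \;\longrightarrow\; Y,
\]
and analysing the tautological data on the blow-up yields a Fourier--Mukai kernel realising a functor $D^b(\widetilde{\PP}(K^\vee))\to D^b(\shH(\PP(\sL)))$ over $Y$. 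Morally, the blow-up is exactly what converts the non-locally-free quotient $K^\vee\twoheadrightarrow E$ into a line-bundle quotient on which the universal projectivization becomes classical.

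Finally, applying the derived projectivization formula of \cite{JL17} to the complex $[\sO_Y(-1)\to\sL_Y]$ yields a semiorthogonal decomposition
\[
D^b(\shH(\PP(\sL))) = \bigl\langle D^b(\widetilde{\PP}(K^\vee)),\; D^b(Y)_1,\; \ldots,\; D^b(Y)_{N-2} \bigr\rangle
\]
in which the rightmost copies of $D^b(Y)_i$ match, term by term, the non-primitive Lefschetz pieces prescribed by Kuznetsov's HPD setup for the natural Lefschetz structure on $\PP(\sL)$ inherited from $\PP(V)$. By the very definition of HPD, the primitive component is the HPD category, and the construction identifies it with $D^b(\widetilde{\PP}(K^\vee))$ fibred over $\PP(V^\vee)$ via the composite $\widetilde{\PP}(K^\vee)\to\PP(K^\vee)\to\PP(V^\vee)$.

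The main obstacle is the second step: one must recognise that $E$ is the precise obstruction to $\PP(K^\vee)\to Y$ being regular, so that $\Bl_{\PP(E)}\PP(K^\vee)$ is the minimal correction compatible with the derived structure of $\operatorname{coker}\alpha$. Once this is in hand, matching the semiorthogonal pieces on both sides uses Orlov's blow-up formula together with the projectivization formula of \cite{JL17}, and amounts to a direct if detailed verification of Kuznetsov's HPD axioms.
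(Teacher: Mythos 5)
Your strategy runs in the opposite direction to the paper's: you want to compute the HPD of $\PP(\sL)$ directly from its own universal hyperplane $\shH(\PP(\sL))$ over $Y=\PP(V^\vee)$ and exhibit $D(\widetilde{\PP}(K^\vee))$ as the primitive part, whereas the paper equips the smooth blow-up $\widetilde{\PP}(K^\vee)$ with an explicit Lefschetz decomposition (Lem.~\ref{lem:lef:Bl}), realizes its universal hyperplane over $\PP(V)$ via the blow-up/Cayley-trick geometry of \S\ref{sec:BlvsCayley}, and then uses Orlov's two formulas (Thm.~\ref{thm:HPDI}, Thm.~\ref{thm:blow-up}) plus the mutations of Lem.~\ref{lem:mut:Bl} to identify the primitive part with $D(\PP(\sL))$; the statement about the HPD of $\PP(\sL)$ then follows by reflexivity. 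Your direction is viable in principle, and your opening observations are correct: $\shH(\PP(\sL))\cong\PP_Y(\operatorname{coker}\alpha)$ as schemes (in fact everywhere, not only where $\alpha$ is fibrewise injective), and $Z=\PP(\sExt^1_S(\sL,\sO_S))$ is exactly the base locus of the rational map $\PP(K^\vee)\dashrightarrow\PP(V^\vee)$. But the proposal leaves genuine gaps precisely where the content of the theorem lies.

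First, the step that would actually prove the theorem --- identifying the primitive piece of $D(\shH(\PP(\sL)))$ with $D(\widetilde{\PP}(K^\vee))$ over $\PP(V^\vee)$ --- is only asserted. Applying \cite{JL17} to $\sG':=\operatorname{coker}\alpha$ requires a two-term locally free resolution (e.g.\ $0\to\sO_Y(-1)\oplus K_Y\to V_Y\to\sG'\to0$, which already needs $k<N$ and the regularity/dimension hypotheses of that theorem, none of which you check), and it produces the piece $D\bigl(\PP_Y(\sExt^1_Y(\sG',\sO_Y))\bigr)$; you must then prove $\PP_Y(\sExt^1_Y(\sG',\sO_Y))\cong\Bl_{Z}\PP(K^\vee)$ compatibly with the projections to $\PP(V^\vee)$, and that the resulting embedding is a $\PP(V^\vee)$-linear Fourier--Mukai functor as demanded by Def.~\ref{def:HPD}. ``Analysing the tautological data'' and ``the universal property of blowing up'' do not substitute for this; it is of the same order of difficulty as the geometry the paper imports in \S\ref{sec:BlvsCayley}. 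Second, Def.~\ref{def:HPD} requires a Lefschetz decomposition of $D(\PP(\sL))$ with components $\shA_i$, and the ambient pieces of $D(\shH(\PP(\sL)))$ must be of the form $\iota_\shH^*(\shA_i(i)\boxtimes_S D(\PP(V^\vee)))$, not bare copies of $D(Y)$. You never construct such a structure (``inherited from $\PP(V)$'' is not a construction, and $\PP(\sL)$ is in general singular --- this is exactly why the paper places the Lefschetz structure on the blow-up side). Moreover your count is off: $\sG'$ has generic rank $\ell-1=N-k-1$, so the projectivization formula yields $\ell-1$ ambient pieces, not $N-2$; the two agree only when $k=1$. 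As written, the proposal records the right geometric heuristic but defers, without proof, the identifications that constitute the proof.
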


The homological projective dual (HPD) of a Lefschetz variety $X \to \PP(V)$, introduced by Kuznetsov \cite{Kuz07HPD}, denoted by $Y = X^\hpd \to \PP(V^\vee)$, is a homological modification of the classical projective dual variety $X^\vee \subset \PP(V^\vee)$ of $X \to \PP(V)$, see \S \ref{sec:HPD} for precise definitions.

The HPD relation is {\em reflexive}: $(X^\hpd)^\hpd \simeq X$, see \cite{Kuz07HPD, JLX17}; And HPD {\em extends} the previously discussed linear duality  $\{ \PP(L) \subset \PP(V) \} \leftrightarrow \{ \PP(L)^\perp: = \PP(K^\vee) \subset \PP(V^\vee)\}$ between projective subbundles: $\PP(L)^\hpd = \PP(L)^\perp$, see \cite[Cor. 8.3]{Kuz07HPD}, \cite[Cor. 5.16]{JLX17}, but notice that our theorem (in the case when $\sL$ is locally free) also provides a different proof of this fact. Therefore, thanks to above theorem, it makes sense to denote:
	$$ \PP(\sL)^\perp : = \widetilde{\PP}(K^\vee) = \PP(\sL)^\hpd  \to \PP(V^\vee)$$ 
and regard it as the {\em dual} of $\PP(\sL) \subset \PP(V)$. The relation $ \PP(\sL) \leftrightarrow \PP(\sL)^\perp$ hence generalizes the usual linear duality. 

An immediate consequence of our theorem is the following generalization of the {fundamental theorem of HPD} from linear sections to the above generalized linear system $V \twoheadrightarrow \sL$. 

\begin{theorem}[Fundamental theorem of HPD for $V \twoheadrightarrow \sL$] \label{thm:HPD} Let $\shA$ be a $\PP(V^\vee)$-linear Lefschetz category of length $m$ with Lefschetz components $\shA_i$'s, and $\shA^\hpd$ be its HPD category, which is a $\PP(V)$-linear Lefschetz category of length $n$ with Lefschetz components $\shA_j^\hpd$'s. Then for $1 \le \ell \le N$, there are semiorthogonal decompositions
	\begin{align*}
\shA_{\PP(\sL) ^\perp}  &= \big\langle {}^{\rm prim}(\shA_{\PP(\sL) ^\perp}) , ~~  \shA_1^{\epsilon}(H),  \ldots, \shA_{\ell-1}^{\epsilon}((\ell-1)H), \\
			& \qquad \qquad \qquad \qquad   \langle \shA_{\ell}, \shA_{\ell}^{\epsilon} \rangle (\ell H), \ldots, \langle \shA_{m-1},  \shA_{m-1}^{\epsilon}\rangle ((m-1)H) \big\rangle, \\
\shA^{\hpd}_{\PP(\sL)}  & =  \big\langle  \shA^\hpd_{1-n}((\ell-n)H'), \ldots, \shA^\hpd_{-\ell}(-H') , ~~  (\shA^{\hpd}_{\PP(\sL)} )^{\rm prim}\big \rangle.
	\end{align*}
Furthermore, there is an equivalence of categories of the primitive components:
	$${}^{\rm prim}(\shA_{\PP(\sL) ^\perp}) \simeq (\shA^{\hpd}_{\PP(\sL)} )^{\rm prim}.$$
	\end{theorem}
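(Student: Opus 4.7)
The plan is to deduce Theorem~\ref{thm:HPD} as a direct consequence of Theorem~\ref{thm:duality} combined with the general (``nonlinear'') fundamental theorem of HPD for Lefschetz categories, as formulated in \cite{Kuz07HPD,JLX17}. The conceptual point is that Theorem~\ref{thm:duality} upgrades the classical pair $(\PP(L),\PP(L)^\perp)$ of mutually dual linear subbundles to the HPD-dual pair $(\PP(\sL),\,\PP(\sL)^\perp=\widetilde{\PP}(K^\vee))$ of Lefschetz categories over $(\PP(V),\PP(V^\vee))$, and this is the only genuinely new ingredient needed; everything else is formal.

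First, I would equip both sides with their natural Lefschetz structures. The side $\PP(\sL)\to\PP(V)$ carries a Lefschetz structure of length $\ell$ (the generic rank of $\sL$) via Orlov's projective bundle decomposition on the open locus $S\setminus\Sing(\sL)$, extended across $\Sing(\sL)$ using the blow-up resolution underlying Theorem~\ref{thm:duality}. Symmetrically, $\widetilde{\PP}(K^\vee)\to\PP(V^\vee)$ carries a length-$(N-\ell)$ Lefschetz structure, assembled from Orlov's blow-up formula for $\widetilde{\PP}(K^\vee)\to\PP(K^\vee)$ along the center $\PP(\sExt^1_S(\sL,\sO_S))$, composed with the projective bundle decomposition of $\PP(K^\vee)\to S$. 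The ``auxiliary'' pieces $\shA_i^{\epsilon}$ appearing in the statement come from precisely these Lefschetz data on the blow-up side.

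Next, I would apply the fundamental theorem of HPD to the Lefschetz pair $(\shA,\shA^\hpd)$ base-changed along the HPD pair $(\PP(\sL),\PP(\sL)^\perp)$. Forming the categories $\shA_{\PP(\sL)^\perp}:=\shA\otimes_{\PP(V^\vee)} \D(\PP(\sL)^\perp)$ and $\shA^\hpd_{\PP(\sL)}:=\shA^\hpd\otimes_{\PP(V)} \D(\PP(\sL))$, the theorem outputs canonical semiorthogonal decompositions of both, together with an equivalence of their primitive summands. Matching the exact indexing in the displayed decompositions amounts to tracking the coupling between the Lefschetz pieces $\shA_i$ of $\shA$ and the Lefschetz data on the blow-up side: for $1\le i\le \ell-1$ only $\shA_i^{\epsilon}(iH)$ survives (the $\shA_i$ contribution is absorbed into the primitive component on the other side), whereas for $\ell\le i\le m-1$ one additionally retains $\shA_i$, producing the mixed term $\langle \shA_i,\shA_i^{\epsilon}\rangle(iH)$. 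The symmetric analysis yields the second displayed decomposition, and the equivalence ${}^{\rm prim}(\shA_{\PP(\sL)^\perp})\simeq (\shA^\hpd_{\PP(\sL)})^{\rm prim}$ is the hallmark output of the fundamental theorem.

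The main obstacle is combinatorial bookkeeping: confirming that the Lefschetz structures on $\PP(\sL)$ and $\widetilde{\PP}(K^\vee)$ have exactly the shape required, that the twists $iH$ and $jH'$ and the cutoff index $\ell$ arise as written, and that the base-change behavior of HPD respects these structures. As a sanity check, when $\sL=L$ is locally free one has $\sExt^1_S(\sL,\sO_S)=0$, so $\widetilde{\PP}(K^\vee)=\PP(K^\vee)=\PP(L)^\perp$ is a plain projective subbundle; the $\shA_i^{\epsilon}$ pieces then vanish and the statement collapses to the classical fundamental theorem of HPD for linear sections, as expected.
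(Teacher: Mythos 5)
Your proposal follows essentially the same route as the paper: Theorem~\ref{thm:HPD} is deduced by applying a generalized (``nonlinear''/pairwise) fundamental theorem of HPD --- in the paper, the categorical Pl\"ucker formula of \cite{JLX17} (or alternatively the nonlinear HPD theorem of \cite{KP18,JL18join}) --- to the two HPD pairs $(\shA,\shA^\hpd)$ and $(\PP(\sL),\PP(\sL)^\perp=\widetilde{\PP}(K^\vee))$ supplied by Theorem~\ref{thm:duality}. The only caveat is that Kuznetsov's original theorem in \cite{Kuz07HPD} covers only linear sections, so the precise input is the pairwise statement of \cite{JLX17}, with the Lefschetz data on $\widetilde{\PP}(K^\vee)$ given by Lemma~\ref{lem:lef:Bl}; your bookkeeping of the correction terms $\shA_i^{\epsilon}$ and the locally free degeneration check are consistent with this.
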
 
If $\sL$ is locally free, then the ``correction terms" $\shA_i^{\epsilon} = \emptyset$, and the theorem reduces to the usual fundamental theorem of HPD (see \cite{Kuz07HPD, JLX17, Ren, P18}).

If $\sL$ is not locally free, then there are nontrivial ``correction terms":
	$$\shA_{i}^{\epsilon} : = (\shA_{i}) |_{\PP(\sExt^1(\sL,\sO))}  = \shA_{i} \boxtimes_S D(\PP(\sExt^1(\sL,\sO))), \quad \text{for} \quad i = 1, \ldots, m-1,$$
supported on ${\rm Sing}(\sL) \subset S$. Our theorem shows that, after taking these corrections into consideration, the fundamental theorem of HPD still holds.

\subsection*{Acknowledgment} I am grateful for Richard Thomas, Andrei C\v{a}ld\v{a}raru, Mikhail Kapranov, Zak Turcinovic, Francesca Carocci for many helpful discussions, and my collaborators Conan Leung and Ying Xie for numerous discussions on HPD and our joint projects. I am supported by a grant from National Science Foundation (Grant No. DMS -- 1638352) and the Shiing-Shen Chern Membership Fund of IAS. 

\section{Preliminaries}
\subsection{Conventions} Let $S$ be a fixed scheme, for simplicity we assume to be smooth over an algebraically closed field $\kk$ of characteristic zero. All schemes considered in this paper will be $S$-schemes. Let $V$ be a fixed vector bundle of rank $N \ge 2$ over $S$, and $V^\vee$ be the dual vector bundle. We use Grothendieck convention $\PP(\sE): =  \Proj_S \Sym_{\sO_S}^\bullet \sE$ for a coherent sheaf $\sE$ on $S$. We use $D(X): = D^b_{\rm coh}(X)$ to denote the bounded derived categories of coherent sheaves on a scheme $X$.

Let $X$, $Y$ be $S$-schemes, and $f: X \to Y$ a proper $S$-morphism, then (whenever well defined) denote $\RR f_*$ and $\LL f^*$ the right and respectively left derived functors of usual pushforward $f_* \colon \coh X \to \coh Y$ and pullback $f^* \colon \coh Y \to \coh X$. Denote by $\otimes$, $\sHom(-,-)$ the tensor and sheaf (internal) $\Hom$ on $\coh X$, and $\otimes^\LL$ and $\RR \sHom(-,-)$ the derived functors. A \textit{Fourier-Mukai functor} is an exact functor between $D(X)$ and $D(Y)$ of the form 
	$$\Phi_{\shP}^{X \to Y} (-) = \Phi_{\shP} (-) : =  \RR \pi_{Y*} \,(\LL \pi_X^*\,(-) \otimes^\LL \shP): D(X) \to D(Y),$$
where $\shP \in D(X \times Y)$ is called the {\em Fourier-Mukai kernel}, and $\pi_{X}: X\times Y \to X$ and $\pi_Y: X \times Y \to Y$ are natural projections. 

\subsection{Generalities} The readers are referred to \cite{Huy, Cal, Kuz14sod} for basic notations and properties of derived categories of coherent sheaves, and semiorthogonal decompositions. 
	
A full triangulated subcategory $\shA$ of a triangulated category $\shT$ is called {\em admissible} if the inclusion functor $i = i_{\shA}: \shA\to \shT$ has both a right adjoint $i^!: \shT \to \shA$ and a left adjoint $i^*: \shT \to \shA$. If $\shA\subset \shT$ is admissible, then $\shA^\perp = \{ T \in \shT \mid \Hom(\shA,T) = 0\}$ and ${}^\perp \shA =\{ T \in \shT \mid \Hom(T, \shA) = 0\}$ are both admissible, and $\shT = \langle \shA^\perp, \shA \rangle =  \langle \shA, {}^\perp \shA\rangle$.

A {\em semiorthogonal decompositions} (SOD) for a triangulated category $\shT$, written as
		$$\shT = \langle \shA_1, \ldots, \shA_n \rangle.$$
 a sequence of admissible full triangulated subcategories $\shA_1, \ldots, \shA_{n}$, such that $(i)$ $\Hom (a_j ,a_i) = 0$ for all $a_i \in \shA_i$ and $a_j \in \shA_j$, $j > i$, and, $(ii)$ they generate the whole $D(X)$.
Starting with a semiorthogonal decomposition of $\shT$, one can obtain a whole collection of new decompositions by functors called {\em mutations}. The functor $\bL_\shA: = i_{\shA^\perp} i^*_{\shA^{\perp}}$  (resp. $\bR_{\shA} : =  i_{{}^\perp \shA} i^!_{ {}^\perp \shA}$) is called the {\em left (resp. right) mutation through $\shA$}.  For any $b \in \shT$, by there are exact triangles
		$$ i_\shA i^!_{\shA} (b) \to b \to \bL_{\shA} b \xrightarrow{[1]}{},\qquad  \bR_{\shA} b \to b \to  i_\shA i^*_{\shA} (b) \xrightarrow{[1]}{}.$$
$(\bL_{\shA})\,|_{\shA} = 0$ and $(\bR_{\shA})\,|_{\shA} = 0$ are the zero functors;  $(\bL_{\shA})\,|_{{}^\perp \shA} : {}^\perp \shA \to \shA^\perp$ and $(\bR_{\shA})\,|_{\shA^\perp } : \shA^\perp \to {}^\perp \shA $ are mutually inverse equivalences of categories. Staring with a semiorthogonal decomposition $\shT = \langle \shA_1, \ldots, \shA_{k-1}, \shA_k, \shA_{k+1}, \ldots, \shA_n \rangle$ of admissible subcategories, one can obtain other sods through mutations, for $k \in [1,n]$:
		\begin{align*}
		\shT&  = \langle \shA_1, \ldots,\shA_{k-2}, \bL_{\shA_{k-1}} (\shA_k), \shA_{k-1}, \shA_{k+1}, \ldots, \shA_n \rangle\\
		& =  \langle \shA_1, \ldots, \shA_{k-1}, \shA_{k+1}, \bR_{\shA_{k+1}} (\shA_k), \shA_{k+2}, \ldots, \shA_n \rangle
		\end{align*}
We refer the reader to \cite{BK, Kuz07HPD} for more about mutations.

For a  $S$-scheme $a \colon X \to S$ be a, $D(X)$ is naturally equipped with {\em $S$-linear structure}, given by $A \otimes a^* F$, for any $F \in D(S)$ and $A \in D(X)$. An admissible subcategory $\shA \subset D(X)$ is called {\em $S$-linear} if $A \otimes a^* F \in \shA$ for all $A \in \shA$ and $F \in D(S)$. Such an admissible subcategory $\shA$ will be referred as an {\em $S$-linear category}. An {\em $S$-linear} functor between $S$-linear categories is an exact functor functorially preserving $S$-linear structures. An $S$-linear SOD $D(X) = \langle \shA_1, \ldots, \shA_n \rangle$ for a $S$-scheme $X$ is a SOD such that all $\shA_i$'s are $S$-linear subcategories. See \cite{Kuz11Bas} for more about linear categories. Many geometric operations (projective bundles, blowing up, etc) can be performed on linear categories, see \cite{JL18Bl}. See also \cite{P18} for discussions in the Lurie's framework of stable $\infty$-categories.

\subsection{Generalized universal hyperplane section and Orlov's results} \label{sec:Cayley}
The references are \cite{RT15HPD,Orlov05}, see also \cite[\S 2.3]{JL17}, and \cite[\S 3.4]{JL18Bl} for noncommutative cases.

Let $\sE$ to be a locally free sheaf of rank $r$ on a regular scheme
$X$, and $s \in H^0(X,\sE)$ be a regular section. Denote $Z:=Z(s)$ the zero locus of the section $s$. Then through $H^0(X,\sE) = H^0(\PP(\sE), \sO_{\PP(\sE)}(1))$, the section $s$ corresponds to a section $f_s$ of $\sO_{\PP(\sE)}(1)$ on $\PP(\sE)$. The zero loci $\shH_s : = Z(f_s) \subset \PP(\sE)$ is called the {\em generalized universal hyperplane}, which comes with projection $\pi: \shH_s \to X$. The general fiber of this projection is a projective space $\PP^{r-2}$, and the fiber dimensions of $\pi$ jumps exactly over $Z$. If we denote $i: Z \hookrightarrow X$ the inclusion, then its normal sheaf is $\sN_{i} \simeq \sE|_Z$, and it is direct to see $\pi^{-1}(Z) = \PP(\sN_i)$. 

The above situation is called {\em Cayley's trick}. The situation is categorified by Orlov to obtain relationships between $D(Z)$ and $D(\shH_s)$ (see also \cite{JL17, JL18Bl}).

\begin{theorem}[Orlov, {\cite[Prop. 2.10]{Orlov05}}] \label{thm:HPDI} In the above situation, then the functors $\RR j_*\,p^*: D(Z) \to D(\shH_s)$ and $\LL \pi^*(-) \otimes \sO_{\shH_s}(k) : D(X) \to D(\shH_s)$ are fully faithful, where $k =1, \ldots, r-1$, $\sO_{\shH_s}(k) :=  \sO_{\PP(\sE)}(k)|_{\shH_s}$, and there is a semiorthogonal decomposition:
	\begin{align*}
	D(\shH_{s}) &= \langle \RR j_* \, p^* D(Z), ~~\LL \pi^* D(X) \otimes \sO_{\shH_s}(1), \ldots , \pi^* D(X) \otimes  \sO_{\shH_s}(r-1) \rangle, \\
		& =  \langle  \LL \pi^* D(X) \otimes  \sO_{\shH_s}(2-r),  \ldots,  \LL \pi^* D(X), ~~\RR j_* \, p^* D(Z) \rangle.
	\end{align*}
\end{theorem}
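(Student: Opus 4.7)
The plan is to establish the second semiorthogonal decomposition directly; the first then follows by standard mutations through the twisting functor. The verification splits into three parts: (a) fully faithfulness and mutual semiorthogonality of the twist components $\LL\pi^*(-)\otimes \sO_{\shH_s}(k)$ for $k\in\{2-r,\ldots,0\}$, (b) fully faithfulness of $\RR j_* p^*$ together with its semiorthogonality against the twists, and (c) generation of $D(\shH_s)$.

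For (a), I would apply $\RR q_*$ to the ideal-sheaf sequence $0\to \sO_{\PP(\sE)}(m-1)\to \sO_{\PP(\sE)}(m)\to \iota_*\sO_{\shH_s}(m)\to 0$ and invoke the standard projective bundle formula for $q\colon \PP(\sE)\to X$. This yields $\RR\pi_*\sO_{\shH_s}=\sO_X$ and $\RR\pi_*\sO_{\shH_s}(m)=0$ for $-(r-2)\le m\le -1$; fully faithfulness and semiorthogonality then follow from the projection formula.

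For (b), since $s$ is a regular section of $\sE$ on smooth $X$, a direct differential check shows $\shH_s$ is smooth, so $j$ is a regular closed embedding of codimension $r-1$. From the composition $\PP(\sN_i)\hookrightarrow \PP(\sE)\supset \shH_s$ one identifies the normal bundle $\sN_j$ via the tautological sequence $0\to \sN_j\to p^*\sN_i\to \sO_{\PP(\sN_i)}(1)\to 0$ on the $\PP^{r-1}$-bundle $p\colon \PP(\sN_i)\to Z$, so $\sN_j$ is the tautological subbundle and $\det\sN_j\cong p^*\det\sN_i\otimes\sO(-1)$. The mixed semiorthogonality reduces, via adjunction and this formula for $\det\sN_j$, to the vanishing $\RR p_*\sO(k-1)=0$ for $k\in\{2-r,\ldots,0\}$, which holds by Bott on a $\PP^{r-1}$-bundle. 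For fully faithfulness of $\RR j_* p^*$, I would recast $\RR\Hom(\RR j_* p^*A,\RR j_* p^*B)$ by adjunction as $\RR\Hom(A,\,B\otimes \RR p_*(\LL j^*\RR j_*\sO_{\PP(\sN_i)}\otimes\det\sN_j)[-(r-1)])$; the Koszul filtration of $\LL j^*\RR j_*\sO_{\PP(\sN_i)}$ has associated graded layers $\Lambda^i\sN_j^\vee[i]$, which after twisting by $\det\sN_j$ become $\Lambda^{r-1-i}\sN_j[i]$. Bott vanishing $\RR p_*\Lambda^m\sN_j=0$ for $1\le m\le r-1$ then annihilates all layers except the top, leaving $\RR p_*\sO_{\PP(\sN_i)}=\sO_Z$, so that $\RR\Hom(\RR j_* p^*A,\RR j_* p^*B)\simeq \RR\Hom(A,B)$.

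Generation is the main obstacle. I would use the projective bundle decomposition $D(\PP(\sE))=\langle q^*D(X)(1-r),\ldots,q^*D(X)\rangle$ and study its restriction along $\iota\colon \shH_s\hookrightarrow \PP(\sE)$. Since $\shH_s$ is a Cartier divisor cut out by a section of $\sO(1)$, the Koszul identity $\LL\iota^*\RR \iota_* F\simeq F\oplus F\otimes \sO(-1)[1]$ shows that the image of $\LL\iota^*$ generates $D(\shH_s)$, so the $r$ subcategories $\pi^*D(X)(k)$ for $k\in\{1-r,\ldots,0\}$ jointly span $D(\shH_s)$. They overcount the twist part of the claimed SOD by one component, and the redundant piece will be identified with $\RR j_* p^*D(Z)$ via the (non-Tor-independent) base-change square formed by $i$, $j$, $\pi$, $p$: the natural transformation $\RR j_* p^* i^*\Rightarrow \LL\pi^* i_*$ fits in an exact triangle whose cone lives in the span of the $r-1$ remaining twists, allowing $\RR j_* p^* D(Z)$ to absorb the extra component by mutations. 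A final check that the right-orthogonal to the claimed SOD inside $D(\shH_s)$ is zero then completes the argument.
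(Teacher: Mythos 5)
The paper itself gives no proof of this statement --- it is quoted from Orlov \cite{Orlov05} --- so your argument can only be measured against the standard proofs (Orlov's; see also \cite{RT15HPD,JL17}). Your parts (a) and (b) follow that standard route and are essentially sound: the computation $\RR\pi_*\sO_{\shH_s}\simeq\sO_X$ and $\RR\pi_*\sO_{\shH_s}(m)=0$ for $2-r\le m\le -1$ via the divisor sequence on $\PP(\sE)$, and the treatment of $\RR j_*\,p^*$ via Grothendieck duality, the identification of $\sN_j$ with the twisted relative cotangent bundle inside $p^*\sN_i$, and the Koszul filtration of $\LL j^*\RR j_*$ are exactly what is needed. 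One caveat: your claim that $\shH_s$ is smooth is false in general --- it holds if and only if $Z$ is smooth, whereas the hypotheses (and the application in this paper, where $Z=\PP(\sExt^1(\sL,\sO_S))$ may well be singular) only require $s$ to be a regular section. Fortunately smoothness is not needed: $\shH_s$ is a hypersurface in the regular scheme $\PP(\sE)$, hence Cohen--Macaulay, and $\PP(\sN_i)$ has codimension $r-1$ in it, so $j$ is Koszul-regular, $j^!\simeq\LL j^*(-)\otimes\det\sN_j[-(r-1)]$, and all of (b) goes through; as written, though, your argument only covers the case of smooth $Z$.

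The genuine gap is the generation step (c). The splitting $\LL\iota^*\RR\iota_*F\simeq F\oplus F(-1)[1]$ does show that the $r$ twists $\LL\pi^*D(X)(k)$, $k=1-r,\dots,0$, generate $D(\shH_s)$ up to summands, but the mechanism you propose for trading the extra twist for $\RR j_*\,p^*D(Z)$ is unsupported and, as stated, false. The base-change transformation in question is $\LL\pi^*\RR i_*\Rightarrow\RR j_*\LL p^*$ (the one you wrote does not typecheck), and its cone need not lie in the span of the remaining twists: take $r=2$, $X$ a smooth surface, $Z$ a reduced point, so that $\shH_s=\Bl_Z X$ and $\PP(\sN_i)=E$; applied to $\sO_Z$, the cone is $\sTor_1^{\sO_X}(\sO_{\Bl_Z X},\sO_Z)[2]\simeq j_*\sO_E(-1)[2]$, which cannot lie in $\LL\pi^*D(X)$ because $\RR\pi_*$ kills it while $\RR\pi_*\LL\pi^*\simeq\id$. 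Moreover, the triangle only relates $\LL\pi^*i_*D(Z)$ (an untwisted pullback) to $\RR j_*\LL p^*D(Z)$; it never involves the component $\LL\pi^*D(X)(1-r)$ you are trying to absorb, and your closing sentence --- ``a final check that the right-orthogonal to the claimed SOD is zero'' --- is precisely the generation statement to be proved, not an argument for it. To close the gap you need a real input, for instance: on $\shH_s$ the section $\pi^*s$ of $\pi^*\sE$ lifts, via the relative Euler sequence $0\to\Omega_q(1)\to q^*\sE\to\sO(1)\to 0$, to a regular section of the rank-$(r-1)$ bundle $\Omega_q(1)|_{\shH_s}$ whose zero scheme is exactly $\PP(\sN_i)$; the resulting Koszul resolution of $j_*\sO_{\PP(\sN_i)}$, or alternatively the argument that any $F$ orthogonal to all listed components has $\RR\iota_*F\in q^*D(X)$ and hence vanishes, is what actually finishes the proof in the cited references.
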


\subsection{Blowing up, and relation with Cayley's trick}
Suppose $Z$ is a codimension $r \ge 2$ locally complete intersection of a smooth variety $X$, the {\em blowing up of $Z$ along $X$} is $\pi: \Bl_Z X : = \PP(\sI_Z) \to X$, where $\sI_Z$ is the ideal sheaf of $Z$ inside $X$. The {\em exceptional divisor} is $i_E : E := \Bl_Z X \times_X Z \hookrightarrow \Bl_Z X$. Since $\sI_Z|_Z = \sN_{Z/X}^\vee$, therefore $E = \PP(\sN_{Z/X}^\vee)$. Denote $p\colon E \to Z$ be the projection. The following is due to Orlov \cite{Orlov92} (see also \cite{JL18Bl} for the case without smoothness condition on $Z$ and for the noncommutative case).

\begin{theorem}[Blowing up formula, Orlov \cite{Orlov92}] \label{thm:blow-up}
In the above situation, then the functors $\LL \pi^*: D(X) \to D(\Bl_Z X)$ and $ \RR i_{E*} \, \LL p^* (-) \otimes \sO(-kE) : D(Z) \to D(\Bl_Z X)$ are fully faithful, $k \in \ZZ$. Denote the image of the latter to be $D(Z)_k$, then 
	\begin{align*} D(\Bl_Z X)	& = \langle \LL \pi^* D(X), ~ D(Z)_0, D(Z)_1, \ldots, D(Z)_{r-2} \rangle; \\
				& = \langle D(Z)_{1-r}, \ldots, D(Z)_{-2}, D(Z)_{-1}, ~  \LL \pi^* D(X) \rangle. 			
	\end{align*}
\end{theorem}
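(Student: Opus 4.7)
My plan is to verify the three standard ingredients: each of the displayed functors is fully faithful, the images are semiorthogonal in the prescribed order, and together they generate $D(\Bl_Z X)$. Set $\Phi_k(F) := \RR i_{E*}\bigl(\LL p^* F \otimes \sO(-kE)|_E\bigr)$. Since $E \subset \Bl_Z X$ is a Cartier divisor with $N_{E/\Bl_Z X} \simeq \sO_E(-1)$ (the tautological line bundle on $E = \PP(\sN_{Z/X}^\vee)$ in the Grothendieck convention), we have $\sO(-E)|_E \simeq \sO_E(1)$, so $\Phi_k(F) \simeq \RR i_{E*}(\LL p^* F \otimes \sO_E(k))$ with essential image $D(Z)_k$.

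Full-faithfulness of $\LL\pi^*$ rests on the identity $\RR\pi_* \sO_{\Bl_Z X} \simeq \sO_X$, which holds because $Z \subset X$ is lci; the projection formula then gives $\RR\pi_* \LL\pi^* F \simeq F$. For $\Phi_k$, I would compute the unit $F \to \Phi_k^R \Phi_k(F)$ using the right adjoint $i_E^!(-) \simeq i_E^*(-) \otimes \sO_E(-1)[-1]$ of the divisorial embedding together with the projective bundle formula for $p$; the calculation reduces to the vanishing $\RR p_* \sO_E(-1) = 0$ on the $\PP^{r-1}$-bundle $E \to Z$.

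Semiorthogonality is obtained by the same adjoint calculus. Both $\Phi_k^R \circ \LL\pi^* = 0$ for $0 \le k \le r-2$ and $\Phi_l^R \circ \Phi_k = 0$ for $0 \le k < l \le r-2$ reduce, via base change on the cartesian square relating $E, \Bl_Z X, Z, X$ and the self-intersection triangle
\[
M \otimes \sO_E(1)[1] \to \LL i_E^* i_{E*} M \to M,
\]
to the standard vanishings $\RR p_* \sO_E(j) = 0$ for $-(r-1) \le j \le -1$ on a $\PP^{r-1}$-bundle. A direct substitution shows the relevant indices always land in this vanishing range.

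The main obstacle is generation. I would show that any object $W$ in the right orthogonal of $\langle \LL\pi^* D(X), D(Z)_0, \ldots, D(Z)_{r-2}\rangle$ must vanish. The constraints translate to $\RR\pi_* W = 0$ and $\RR p_*(\LL i_E^* W \otimes \sO_E(j)) = 0$ for $j \in \{-1, \ldots, -(r-1)\}$. Combining these via the triangles
\[
W \otimes \sO(-(j+1)E) \to W \otimes \sO(-jE) \to \RR i_{E*}\LL i_E^*\bigl(W \otimes \sO(-jE)\bigr)
\]
and the projective bundle SOD on $E \to Z$ (which reconstructs $\LL i_E^* W$ from the pushforwards $\RR p_*(\LL i_E^* W \otimes \sO_E(k))$ for $k = 0, \ldots, r-1$, with $r-1$ of them already forced to vanish and the remaining one controlled by $\RR\pi_* W = 0$), one deduces $W = 0$. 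The second SOD in the statement is then obtained by the analogous analysis with shifted indices $k \in \{-(r-1), \ldots, -1\}$, noting that for this range the roles of left and right adjoints in the semiorthogonality check are interchanged, but the needed vanishings $\RR p_* \sO_E(j) = 0$ remain in the same range $-(r-1) \le j \le -1$.
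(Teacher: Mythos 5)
The paper itself offers no proof of this statement: it is quoted as Orlov's blow-up formula with a citation to \cite{Orlov92} (and to \cite{JL18Bl} for the case where $Z$ is merely a local complete intersection), so there is no internal argument to compare against; what you have written is a reconstruction of the standard Bondal--Orlov proof, and its ingredients are correct. The identifications $\sO(-E)|_E \simeq \sO_E(1)$ and $i_E^!(-) \simeq \LL i_E^*(-)\otimes\sO_E(-1)[-1]$, the reduction of full faithfulness of $\LL\pi^*$ to $\RR\pi_*\sO_{\Bl_Z X}\simeq\sO_X$ (which you correctly tie to the lci hypothesis), the reduction of full faithfulness of your $\Phi_k$ and of all semiorthogonality checks to the vanishings $\RR p_*\sO_E(j)=0$ for $-(r-1)\le j\le -1$ on the $\PP^{r-1}$-bundle $E\to Z$ via the divisor triangle $M\otimes\sO_E(1)[1]\to \LL i_E^* i_{E*}M\to M$, and the index bookkeeping for both orderings in the statement all check out. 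The one place where your sketch is thinner than a complete proof is generation: the $r-1$ vanishings do force $\LL i_E^*W$ into the single component $\LL p^*D(Z)$ of the projective-bundle decomposition of $D(E)$, but the phrase ``the remaining one controlled by $\RR\pi_*W=0$'' hides the real work. The projective-bundle SOD reconstructs $\LL i_E^*W$, not $W$; to conclude $W=0$ one must additionally observe that $\RR\pi_*W=0$ forces $W$ to be supported on $E$ (since $\pi$ is an isomorphism off $E$) and then run a d\'evissage on cohomology sheaves (equivalently, on infinitesimal neighborhoods of $E$), feeding the resulting pieces back through $\RR\pi_*W=0$. Finally, since the paper states the theorem with $Z$ only lci, $\Bl_Z X$ and $E$ may be singular; your functors still preserve bounded coherent complexes because $\pi$, $i_E$, $p$ are proper of finite Tor-dimension, but this point deserves a sentence if you want the statement at its stated generality --- this is precisely what the citation to \cite{JL18Bl} covers.
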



\subsubsection{Relationship with Cayley's trick}\label{sec:BlvsCayley} There is a wonderful geometry relating blowing ups with Cayley's trick \cite{AW93}. In the situation of Cayley's trick (\S \ref{sec:Cayley}), if we pull back $\pi: \shH_s \to X$ along the blow-up $\beta: \Bl_Z X \to X$ of $X$ along $Z$, then the fiber product $\Bl_Z X \times_X \shH_s$ will have two irreducible components: one is $\PP(\sN_i) \times_Z \PP(\sN_i^\vee)$, the other is the {\em strict transform} of $\shH_s$ along the blow-up $\beta$, $\sU : = \overline{(\shH_s \,\backslash \PP(\sN_i)) \times_X \Bl_Z X} \subset \Bl_Z X \times_X \shH_s$. Then the projection $\pi_\sU: \sU \to \Bl_Z X$ will be a projective bundle of fiber $\PP^{r-2}$, and its restriction to $\PP(\sN_i^\vee)$ is nothing but the fiberwise incidence quadric $Q_Z \subset \PP(\sN_i) \times_Z \PP(\sN_i^\vee)$, which is defined fiberwisely over $z\in Z$ by incidence relation $\{(n,n^\vee)\in \PP(\sN_i|_z) \times \PP(\sN_i^\vee|_z)~|~\langle n,n^\vee\rangle = 0\}$. From blowing up closure lemma, $\sU$ is the blowing up of $\shH_s$ along $\PP(\sN_i)$:
	$$\sU = \Bl_{\PP(\sN_i)} \shH_s,\quad j_Q: Q_Z \hookrightarrow \sU ~~\text{is the exceptional divisor.}$$ 
Therefore we have a commutative diagram
\begin{equation*}
	\begin{tikzcd}[row sep= 2.6 em, column sep = 2.6 em]
	\sU \ar{d}[swap]{\pi_\sU} \ar{r}{\gamma} & \shH_s \ar{d}{\pi}  
	\\
	\Bl_Z X \ar{r}{\beta}         & X
	\end{tikzcd}	
\end{equation*}
relating the projection $\pi: \shH_s \to X$ from the universal hyperplane with the projection $\pi_\sU$ of a projective bundle, via the two blow-ups $\beta$ and $\gamma$. Notice the pullback $\widetilde{q}: \Bl_Z X \times_X \PP(\sE)  \to \Bl_Z X$ of projective bundle $q$ along $\beta$
is also projective bundle over $\Bl_Z X$, and also the divisor inclusion $\iota_\sU : \sU \hookrightarrow \Bl_Z X \times_X \PP(\sE)$ is defined by fiberwise quadric incidence relation (between $\Bl_Z X \subset \PP(\sE^\vee)$ and $\PP(\sE)$), i.e. $\sU$ is the {\em universal hyperplane} for $\Bl_Z X \subset \PP(\sE^\vee)$ over $X$ in the language of HPD \S \ref{sec:HPD}.

\begin{remark} Another way of understanding this picture (\cite{AW93}) is: $\Bl_Z X$ is the connected component of {\em Hilbert scheme} parametrizing the deformations of a general fiber $\PP^{r-2}$ inside $\shH_s$; $\sU$ is the {\em universal family}, therefore a projective bundle with fiber $\PP^{r-2}$ over $\Bl_Z X$. 
\end{remark}

\begin{lemma}[{\cite[Prop. 3.4]{CT15}}, {\cite[Lem. 2.9]{JL17}}] \label{lem:mut:Bl} In the situation of blowing up formula Thm. \ref{thm:blow-up}, for any $\shE^\bullet \in D(X)$, $k \in \ZZ$, we have the following equalities in $D(\Bl_Z X)$:
	\begin{align*} & \mathbf{L}_{D(Z)_k} (\LL \pi^* \shE^\bullet \otimes \sO(-(k+1)E)) = \LL \pi^* \shE^\bullet \otimes \sO(-kE), \\
	& \mathbf{R}_{D(Z)_k} (\LL \pi^* \shE^\bullet \otimes \sO(-kE)) =\LL \pi^* \shE^\bullet \otimes \sO(-(k+1)E).
	\end{align*}
\end{lemma}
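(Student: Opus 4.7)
The plan is to exhibit a single distinguished triangle in $D(\Bl_Z X)$,
\[
\LL\pi^*\shE^\bullet \otimes \sO(-(k+1)E) \longrightarrow \LL\pi^*\shE^\bullet \otimes \sO(-kE) \longrightarrow \shC \xrightarrow{[1]}
\]
in which the first term lies in ${}^\perp D(Z)_k$, the third in $D(Z)_k$, and the second in $D(Z)_k^\perp$. As it stands, this is precisely the semiorthogonal-decomposition triangle witnessing the right mutation of $\LL\pi^*\shE^\bullet\otimes\sO(-kE)$ through $D(Z)_k$, yielding the second equality of the lemma; rotated one step to the left it becomes the semiorthogonal-decomposition triangle witnessing the left mutation of $\LL\pi^*\shE^\bullet\otimes\sO(-(k+1)E)$ through $D(Z)_k$, yielding the first equality. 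Both statements thus reduce to producing the triangle and verifying the three containments.

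First I would construct the triangle by tensoring the structure sequence $0\to\sO(-E)\to\sO\to i_{E*}\sO_E\to 0$ of the exceptional divisor with $\LL\pi^*\shE^\bullet\otimes\sO(-kE)$. Applying the projection formula, together with the compatibility $\LL i_E^*\LL\pi^* = \LL p^*\LL i_Z^*$ (which is functoriality of derived pullback applied to the commuting square $\pi\circ i_E = i_Z\circ p$), one identifies the third term as
\[
\shC = \RR i_{E*}\bigl(\LL p^*\LL i_Z^*\shE^\bullet\otimes i_E^*\sO(-kE)\bigr) \in D(Z)_k
\]
directly from the defining formula of the subcategory $D(Z)_k$.

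Next I would verify the two orthogonality conditions using Orlov's blow-up SOD (Theorem \ref{thm:blow-up}). For $\LL\pi^*\shE^\bullet\otimes\sO(-kE)\in D(Z)_k^\perp$, I would tensor both arguments of the relevant Hom by the autoequivalence $\sO(kE)$ and apply the projection formula to cancel the twists; this reduces the required vanishing to $\Hom(\RR i_{E*}\LL p^*\shF,\LL\pi^*\shE^\bullet)=0$ for every $\shF\in D(Z)$, which is exactly the semiorthogonality of $D(Z)_0$ against $\LL\pi^*D(X)$. For $\LL\pi^*\shE^\bullet\otimes\sO(-(k+1)E)\in{}^\perp D(Z)_k$, I would use the adjunctions $\LL i_E^*\dashv\RR i_{E*}$ and $\LL p^*\dashv\RR p_*$ together with the projection formula to reduce the corresponding vanishing to $\RR p_*\bigl(i_E^*\sO(E)\bigr)=0$; since $p:E=\PP(\sN_{Z/X}^\vee)\to Z$ has fibers $\PP^{r-1}$ with $r\ge 2$ and $i_E^*\sO(E)=\sO_{E/Z}(-1)$ is the relative tautological bundle in the paper's Grothendieck convention, this follows from the standard vanishing $\RR p_*\sO_{E/Z}(-1)=0$.

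The main technical obstacle is twist bookkeeping: correctly identifying $i_E^*\sO(E)$ with $\sO_{E/Z}(-1)$, handling the derived pullbacks $\LL\pi^*\shE^\bullet$ and $\LL i_Z^*\shE^\bullet$ for an arbitrary $\shE^\bullet\in D(X)$, and carrying the $\sO(-kE)$-twist through the adjunctions and projection formulae. Once these are in order, both mutation formulas follow formally from Orlov's blow-up SOD and the uniqueness of the mutation triangle.
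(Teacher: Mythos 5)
Your argument is correct: the twisted structure-sequence triangle, the identification of its third term via the projection formula and $\LL i_E^*\LL\pi^*=\LL p^*\LL i_Z^*$, and the two orthogonality checks (semiorthogonality of $D(Z)_0$ against $\LL\pi^*D(X)$, and $\RR p_*\bigl(\sO(E)|_E\bigr)=\RR p_*\sO_{E/Z}(-1)=0$ for the $\PP^{r-1}$-bundle $p$) are exactly what is needed to pin down both mutations. The paper itself gives no proof but defers to \cite{CT15} and \cite{JL17}, whose arguments proceed along essentially the same lines as yours.
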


\subsection{Lefschetz varieties and HPD} \label{sec:HPD} Lefschetz categories are the key ingredients for HPD theory. A variety $X \to \PP(V)$ is said to admit a {\em (right) Lefschetz decomposition} with respect to $\sO_{\PP(V)}(1)$ if there is a semiorthogonal decomposition of the form:
	\begin{equation*}\label{lef:X} 
	D(X)= \langle \shA_0, \shA_1(1), \ldots, \shA_{m-1}(m-1)\rangle,
	\end{equation*}
with $\shA_0 \supset \shA_1 \supset \cdots \supset \shA_{m-1}$ a descending sequence of admissible subcategories, where $\shA_{*}(k) = \shA_{*} \otimes \sO_{\PP(V)}(k)$ denotes the image of $\shA_{*}$ under the autoequivalence $\otimes \sO_{\PP(V)}(k)$ for $k \in \ZZ$. Dually, a {\em left Lefschetz decomposition} of $D(X)$ is a SOD of the form:
	\begin{equation*}\label{duallef:A} 
	D(X) = \langle \shA_{1-m} (1-m), \ldots, \shA_{-1}(-1) , \shA_{0}\rangle,
	\end{equation*}	
	with $\shA_{1-m} \subset \cdots \subset \shA_{-1} \subset \shA_{0}$ an ascending sequence of admissible subcategories. 
	
	The variety $X \to \PP(V)$ is said to be a {\em Lefschetz variety}, or to admit a {\em Lefschetz structure} if $D(X)$ admits both right and left Lefschetz decompositions (with same $\shA_0$ and $m$) as above. If $X$ is a smooth $S$-scheme, then $X$ is a Lefschetz variety if it admits either a right or a left Lefschetz decomposition. The number $m$ is called the {\em length} of the Lefschetz structure. See \cite{Kuz07HPD, Kuz08Lef, JLX17, P18, JL18Bl} for more about Lefschetz decompositions.

Let $Q  = \{(x,[H]) \mid x \in H \}\subset \PP(V) \times_S \PP(V^\vee)$ be the universal quadric for $\PP(V)$ (or equivalently for $\PP(V^\vee)$). Then the {\em universal hyperplane $\shH_{X}$ for $X \to \PP(V)$} is defined to be
	$$\shH_{X}  : = X \times_{\PP(V)} Q \subset X \times_S \PP(V^\vee).$$
Denote $\iota_{\shH}\colon \shH_{X} \to X \times_S \PP(V^\vee)$ the inclusion, then it is easy to show there is a $\PP(V^\vee)$-linear semiorthogonal decomposition (see \cite{Kuz07HPD, RT15HPD, JLX17}):
	$$D(\shH_{X}) = \big \langle \sC, ~~\iota_{\shH}^* (\shA_1(1) \boxtimes_S D(\PP(V^\vee))), \ldots,  \iota_{\shH}^* (\shA_{m-1}((m-1)) \boxtimes_S D(\PP(V^\vee))) \big \rangle.$$

\begin{definition} \label{def:HPD} The category $\sC$ is called the {\em HPD category} of $D(X)$, denoted by $D(X)^\hpd$. If there exists a variety $Y$ with $Y \to \PP(V^\vee)$, and a Fourier-Mukai kernel $\shP \in D(Y \times_{\PP(V^\vee)} \shH_X)$ such that the $\PP(V^\vee)$-linear Fourier Mukai functor $\Phi_{\shP}^{Y \to \shH_X} \colon D(Y) \to D(\shH)$ induces an equivalence of categories $D(Y) \simeq D(X)^\hpd$, then $Y \to \PP(V^\vee)$ is called the {\em homological projective dual variety} or {\em HPD variety} of $X \to \PP(V)$.
\end{definition}

The HPD is a {\em reflexive} relation: $(X^\hpd)^\hpd \simeq X$, see \cite{Kuz07HPD,JLX17}. The primary output of the HPD theory is the Kuznetsov's fundamental theorem of HPD for linear sections \cite{Kuz07HPD}; we refer the readers to the references \cite{Kuz07HPD, Kuz14sod, RT15HPD, JLX17, JL18Bl} for the precise statement of the theorem and its various applications.

\begin{remark} The HPD theory can be set up in the noncommutative setting for a $\PP(V)$--linear Lefschetz category $\shA$, which is a $\PP(V)$--linear category (with proper enhancement) together with a right and left Lefschetz decomposition as above, see \cite{P18, JL18Bl}. Then one can similarly define the HPD category $\shA^\hpd$ of $\shA$, and the fundamental theorem of HPD still holds for dual linear sections of $\shA$ and $\shA^\hpd$, see \cite{JLX17, Ren, P18}.
\end{remark}

\newpage
\section{Generalized linear duality}

As in the introduction, let $V$ and $K$ be vector bundles over $S$ of rank $N \ge 2$ and $k \le N$ respectively, $\sigma \in \Hom_S(K,V)$ be an injective $\sO_S$-module morphism and $\sL = {\rm coker}(\sigma)$ be the cokernel. Denote $\ell = \rank \sL$, therefore $k = N -\ell$. There is a short exact sequence:
	$$0 \to K \to V \to \sL \to 0,$$ 
and the dual sheaves fit into a four-term exact sequence given by (\ref{eqn:dualsL}). Further denote
	$$\tX : = \PP(\sExt^1(\sL,\sO_S)) \subset \PP(K^\vee),$$
which is a desingularization of the degeneracy locus $S_\sigma = \Sing(\sL) \subset S$, and denote by
	$$\widetilde{\PP}(K^\vee): = \Bl_Z \PP(K^\vee) \to \PP(V^\vee)$$
the blowing up of $\PP(K^\vee)$ along $Z \subset  \PP(K^\vee)$. 

\begin{lemma}[Lefschetz decomposition]\label{lem:lef:Bl} The blowing up $\widetilde{\PP}(K^\vee) \to \PP(V^\vee)$ admits a $S$-linear Lefschetz decomposition with respect to the action of $\sO_{\PP(V^\vee)}(1)$: 
 \begin{equation}\label{lef:Bl}
		D(\widetilde{\PP}(K^\vee)) = \big\langle \shA_0, \shA_{1}(1) \ldots, \shA_{r-2}(r-2)\big\rangle, \qquad r = \max\{N, k+1\},
\end{equation}
where $(i)$ denotes the twist by $\sO_{\PP(V^\vee)}(i)$, $i \in \ZZ$, $\shA_0 \supset \shA_1 \supset \ldots \supset \shA_{r-2}$ are given by:			
\begin{align*} 
	& \shA_0 = \ldots  = \shA_{k-1} =  \langle \LL \pi^* \, D(S), D(\tX)_0 \rangle, \quad \shA_{k} =\ldots =  \shA_{N-2} = D(\tX)_0,  & \text{if}~ k \le N-1, \\
	& \shA_0 = \ldots  = \shA_{k-2} =  \langle \LL \pi^*\, D(S), D(\tX)_0 \rangle, \quad \shA_{N-1} =   \LL \pi^*\, D(S),  &\text{if}~ k = N. 
\end{align*}
where $D(\tX)_0$ is the image of $D(\tX)$ under fully faithful embedding $\RR j_*\,\LL p^*$.
\end{lemma}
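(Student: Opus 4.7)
The plan is to combine Theorem \ref{thm:blow-up} (the blow-up formula) with the projective bundle formula for $D(\PP(K^\vee))$, and then rearrange the resulting semiorthogonal decomposition into the desired Lefschetz form via the mutations of Lemma \ref{lem:mut:Bl}. Two geometric inputs are needed first: (a) that $Z \subset \PP(K^\vee)$ is lci of codimension $N$, so Theorem \ref{thm:blow-up} produces the SOD $\langle \LL\pi^*D(\PP(K^\vee)),\, D(Z)_0,\, D(Z)_1,\, \ldots,\, D(Z)_{N-2}\rangle$; and (b) that on $\widetilde{\PP}(K^\vee)$ one has $H = h - E$, where $H$ denotes the pullback of $c_1(\sO_{\PP(V^\vee)}(1))$, $h$ the pullback of $c_1(\sO_{\PP(K^\vee)}(1))$, and $E$ is the exceptional divisor. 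The identity in (b) follows from the universal property of the map to $\PP(V^\vee)$: the composition $\pi^*V^\vee \to \pi^*K^\vee \twoheadrightarrow \sO(h)$ factors, on the blow-up, as a surjection with image $\sO(h)(-E) = \sO(H)$. Expanding $D(\PP(K^\vee)) = \langle D(S), D(S)(h), \ldots, D(S)((k-1)h)\rangle$ then gives the starting SOD
\[
\big\langle \LL\pi^*D(S),\ \LL\pi^*D(S)(h),\ \ldots,\ \LL\pi^*D(S)((k-1)h),\ D(Z)_0,\ D(Z)_1,\ \ldots,\ D(Z)_{N-2}\big\rangle.
\]

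The central step is to rearrange this by a carefully ordered sequence of right mutations. For $j = k-1, k-2, \ldots, 1$ in this order, I would apply $\mathbf{R}_{D(Z)_0}, \mathbf{R}_{D(Z)_1}, \ldots, \mathbf{R}_{D(Z)_{j-1}}$ in succession to the factor $\LL\pi^*D(S)(jh)$. By Lemma \ref{lem:mut:Bl}, each step $\mathbf{R}_{D(Z)_i}$ swaps the factor past $D(Z)_i$ and transforms $\LL\pi^*D(S)(jh)\otimes \sO(-iE)$ into $\LL\pi^*D(S)(jh)\otimes \sO(-(i+1)E)$; after all $j$ steps the factor reads $\LL\pi^*D(S)(jh)\otimes\sO(-jE) = \LL\pi^*D(S)(jH)$ and sits immediately to the left of $D(Z)_j$. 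Carrying this out for every $j$ yields
\[
\big\langle \LL\pi^*D(S),\ D(Z)_0,\ \LL\pi^*D(S)(H),\ D(Z)_1,\ \ldots,\ \LL\pi^*D(S)((k-1)H),\ D(Z)_{k-1},\ D(Z)_k,\ \ldots,\ D(Z)_{N-2}\big\rangle.
\]
Observing that $D(Z)_j = D(Z)_0(jH)$ as subcategories --- because $D(Z)_0 \otimes \pi^*\sO(jh) = D(Z)_0$, the twist $\pi^*\sO(jh)|_E$ being a pullback of a line bundle from $Z$ and hence absorbed into the autoequivalences of $D(Z)$ --- one regroups adjacent pairs into blocks $\langle \LL\pi^*D(S),\ D(Z)_0\rangle(iH)$ for $i = 0, \ldots, k-1$, followed by singletons $D(Z)_0(iH)$ for $i = k, \ldots, N-2$; this is precisely the claimed Lefschetz decomposition, and the descending chain $\shA_0 \supset \cdots \supset \shA_{N-2}$ is manifest. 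The case $k = N$ proceeds identically except that the mutations consume all $D(Z)_i$'s, leaving the last factor $\LL\pi^*D(S)((N-1)H)$ alone as $\shA_{N-1}((N-1)H)$ with $\shA_{N-1} = \LL\pi^*D(S)$.

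The main obstacle I anticipate is establishing the two geometric inputs (a) and (b): verifying that $Z = \PP(\sExt^1(\sL,\sO_S))$ is an lci of codimension exactly $N$ in $\PP(K^\vee)$ (which should be analyzed locally using the four-term sequence (\ref{eqn:dualsL}) and the presentation of $\sExt^1(\sL,\sO_S)$ as the cokernel of $V^\vee \to K^\vee$), and confirming the pullback identity $H = h - E$ cleanly from the universal property of the blow-up. Once these inputs are in place, the mutation bookkeeping is purely combinatorial, driven entirely by Lemma \ref{lem:mut:Bl}, and the Lefschetz structure falls out by inspection.
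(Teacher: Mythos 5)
Your proof is correct and is essentially the paper's own argument spelled out: the paper's proof is precisely to apply the right mutations of Lemma \ref{lem:mut:Bl} to Orlov's blow-up formula (Theorem \ref{thm:blow-up}), combined with the projective bundle decomposition of $D(\PP(K^\vee))$ over $S$ and the relation $\sO_{\PP(V^\vee)}(1)|_{\widetilde{\PP}(K^\vee)} \cong \sO_{\PP(K^\vee)}(1)(-E)$, exactly as in your mutation bookkeeping. The geometric inputs you flag --- that $Z$ is the zero locus of the canonical regular section of the rank-$N$ bundle $V \boxtimes \sO_{\PP(K^\vee)}(1)$, hence lci of codimension $N$, and that $H = h - E$ --- are taken as part of the standing setup in the paper (they appear explicitly in the proof of Theorem \ref{thm:duality}), so your treatment matches the intended argument.
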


\begin{proof}
This follows directly from performing right mutations Lem. \ref{lem:mut:Bl} to Orlov's formula Thm. \ref{thm:blow-up} for the blowing up $\widetilde{\PP}(K^\vee)$. (Cf. \cite[Prop. 3.1]{CT15}, \cite[Prop. 4.4]{JL18Bl}).
\end{proof}

Our main result is the following generalization of linear duality:

\begin{theorem}[Generalized linear duality] \label{thm:duality} The $S$-linear scheme $\PP(\sL) \hookrightarrow \PP(V)$ is homological projective dual to $\widetilde{\PP}(K^\vee) \to \PP(V^\vee)$ with respect to the Lefschetz decomposition (\ref{lef:Bl}). 
\end{theorem}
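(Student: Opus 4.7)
The plan is to identify the HPD category $\sC \subset D(\shH_{\PP(\sL)})$ of $\PP(\sL) \to \PP(V)$ with $D(\widetilde{\PP}(K^\vee))$ via a $\PP(V^\vee)$-linear Fourier-Mukai functor, and to check that the induced Lefschetz structure on the dual side is the one of Lemma \ref{lem:lef:Bl}. The backbone of the argument is the Cayley's trick / blow-up dictionary of \S\ref{sec:BlvsCayley}.

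First I would present $\PP(\sL) \hookrightarrow \PP(V)$ via its dual Cayley incarnation. On $\PP(V^\vee)$, the composition $\sO_{\PP(V^\vee)}(-1) \hookrightarrow \pi^* V \twoheadrightarrow \pi^* \sL$ produces a section $\sigma' \in H^0(\PP(V^\vee), \sL \otimes \sO(1))$ whose set-theoretic zero locus is $\PP(K^\vee) \subset \PP(V^\vee)$. An elementary incidence check (using that $\ell \subset H_q$ inside $V$ if and only if the image of $\ell$ in $\sL$ dies under the chosen quotient $\sL \twoheadrightarrow L_q$) identifies the Cayley hyperplane $\shH_{\sigma'} \subset \PP(\sL) \times_S \PP(V^\vee)$ with the HPD universal hyperplane $\shH_{\PP(\sL)} = \PP(\sL) \times_{\PP(V)} Q$.

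When $\sL$ is locally free of rank $\ell$, $\sigma'$ is a regular section of the rank-$\ell$ bundle $\sL(1)$ and Orlov's Cayley formula (Theorem \ref{thm:HPDI}) immediately identifies the HPD category as $D(\PP(K^\vee))$; this recovers the classical linear duality $\PP(L)^\hpd = \PP(K^\vee) = \widetilde{\PP}(K^\vee)$. For general $\sL$ the two-term resolution $[K \to V] \simeq \sL$ shows that $\sigma'$ should be thought of as a "derived section" of a virtual rank-$\ell$ bundle, and its derived zero locus differs from the naive $\PP(K^\vee)$ by an excess component supported on $\PP(\sExt^1(\sL,\sO)) \subset \PP(K^\vee)$, which is precisely the subscheme $Z$ that is blown up to form $\widetilde{\PP}(K^\vee)$. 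Via the blow-up / Cayley dictionary of \S\ref{sec:BlvsCayley} (the strict-transform description of the universal hyperplane as a projective bundle over the blow-up of the degeneration locus), the "corrected Cayley" SOD of $D(\shH_{\PP(\sL)})$ then has primitive piece $D(\widetilde{\PP}(K^\vee))$ rather than $D(\PP(K^\vee))$, with the extra components absorbed into the Lefschetz pieces of $\PP(\sL)$.

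Once the geometric identification is pinned down, the proof is completed by categorical bookkeeping: the semiorthogonal decomposition of $D(\widetilde{\PP}(K^\vee))$ obtained from the blow-up formula (Theorem \ref{thm:blow-up}) together with mutations (Lemma \ref{lem:mut:Bl}) is matched term-by-term with the corrected Cayley SOD, and the Lefschetz decomposition (\ref{lef:Bl}) was engineered in Lemma \ref{lem:lef:Bl} precisely so that this match works. The induced Fourier-Mukai functor has kernel the structure sheaf $\sO_\shJ$ of the natural incidence $\shJ := \widetilde{\PP}(K^\vee) \times_{\PP(V^\vee)} \shH_{\PP(\sL)}$, and $\PP(V^\vee)$-linearity is automatic from the construction. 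The main obstacle is the corrected-Cayley step: rigorously translating the derived-section / excess-intersection picture for non-regular $\sigma'$ into a statement about $\widetilde{\PP}(K^\vee) = \Bl_Z \PP(K^\vee)$ requires combining the resolution $0 \to K \to V \to \sL \to 0$ with the strict-transform / blow-up-closure analysis of \S\ref{sec:BlvsCayley}, and this is where the essential new content of the theorem lies; the remaining mutation bookkeeping, while delicate, is routine in light of the preliminaries.
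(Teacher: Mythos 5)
Your proposal leaves the actual content of the theorem unproven. The step you yourself flag as ``the main obstacle'' --- turning the non-regular section $\sigma'\in H^0(\PP(V^\vee),\pi^*\sL(1))$ into a ``corrected Cayley'' semiorthogonal decomposition of $D(\shH_{\PP(\sL)})$ with primitive part $D(\widetilde{\PP}(K^\vee))$ --- is exactly the assertion to be established, and nothing in the paper's toolkit supports it as stated: Orlov's Theorem \ref{thm:HPDI} requires a locally free sheaf and a regular section, $\pi^*\sL(1)$ is not locally free over $\Sing(\sL)$, and there is no ``derived section / excess component'' version of Cayley's trick available here. The paper's proof is designed precisely to avoid this: it never works with $\sigma'$ on $\PP(V^\vee)$ at all. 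Instead it uses the single $(1,1)$-divisor $\shH\subset\PP(V)\times_S\PP(K^\vee)$, which is a generalized universal hyperplane in two legitimate ways (for the regular section of $V\boxtimes\sO_{\PP(K^\vee)}(1)$ with zero locus $Z=\PP(\sExt^1(\sL,\sO_S))$, and for the regular section of $K^\vee\otimes\sO_{\PP(V)}(1)$ with zero locus $\PP(\sL)$, the observation of \cite{JL17}), passes to the blow-up $\sU=\Bl_{\PP(V)\times_S Z}\shH$, identifies $\sU$ with the universal hyperplane $\shH_{\widetilde{\PP}(K^\vee)}$ via the Andreatta--Wi\'sniewski geometry of \S\ref{sec:BlvsCayley}, and then rearranges the two resulting decompositions of $D(\sU)$ by the mutations of Lemma \ref{lem:mut:Bl} into the HPD form with respect to (\ref{lef:Bl}). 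Your ``blow-up/Cayley dictionary'' invocation gestures at this geometry but does not carry out the identification of $\sU$ or the mutation argument, which is where the proof actually lives.

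There is also a structural problem with the direction you chose. The theorem asserts that $\widetilde{\PP}(K^\vee)\to\PP(V^\vee)$, equipped with the Lefschetz decomposition (\ref{lef:Bl}) of Lemma \ref{lem:lef:Bl}, has HPD variety $\PP(\sL)\hookrightarrow\PP(V)$; accordingly the paper computes the HPD category inside $D(\sU)=D(\shH_{\widetilde{\PP}(K^\vee)})$. You instead propose to compute the HPD of $\PP(\sL)\to\PP(V)$ inside $D(\shH_{\PP(\sL)})$, but the HPD category is only defined relative to a chosen Lefschetz decomposition of $D(\PP(\sL))$, which you never specify (one would have to extract it from the projectivization formula of \cite{JL17}), and the phrase ``check that the induced Lefschetz structure on the dual side is the one of Lemma \ref{lem:lef:Bl}'' presupposes reflexivity and the Lefschetz structure of the HPD category, i.e.\ additional machinery from \cite{Kuz07HPD, JLX17, P18} that you would need to invoke explicitly to recover the statement as formulated. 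So even granting the missing ``corrected Cayley'' step, the argument as written does not yet yield the theorem with respect to (\ref{lef:Bl}).
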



The HPD relation between $\widetilde{\PP}(K^\vee) \to \PP(V^\vee)$ and $\PP(\sL) \hookrightarrow \PP(V)$ of the theorem can be visualized in the following diagram using Kuznetsov's convention \cite{Kuz07HPD}:
\begin{figure}[h]
\centering
\includegraphics[width=0.8  \textwidth]{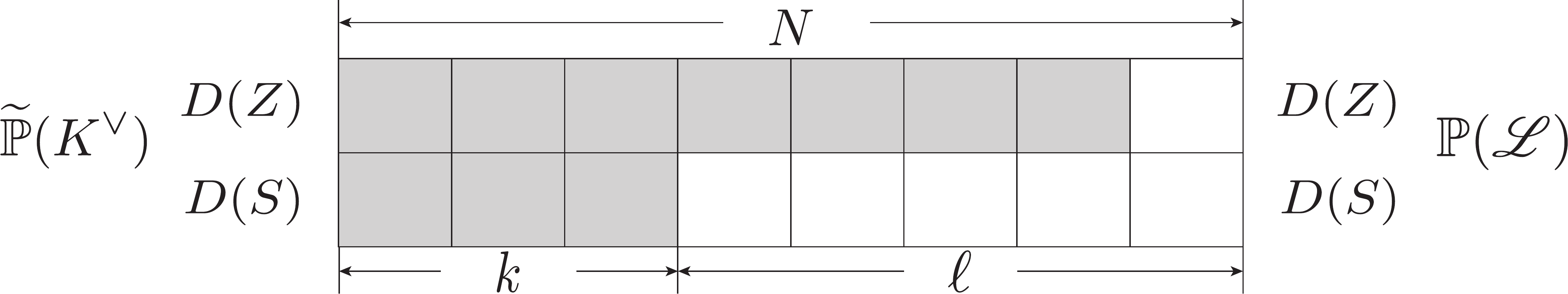}
\end{figure}

The SOD for $D(\PP(\sL))$ obtained from HPD theory applied to Lem. \ref{lem:lef:Bl} agrees with the projectivization formula of \cite[Thm. 3.1]{JL17} (up to mutations). Therefore the above theorem shows the duality between the projectivization formula of \cite{JL17} and the blowing up formula of \cite{Orlov92}, and one can deduce one formula from the other based on results of ``chess game" \cite{JLX17}.

If $\sL$ is locally free, then the above theorem reduces to the usual linear duality" $\PP(L)^\hpd = \PP(L)^\perp \equiv \PP(K^\vee)$. If $K = \sO_S$, and $\sigma=s: \sO \to V$ a regular section, then this is the HP duality between generalized universal hyperplane $\shH_s \subset \PP(V)$ and blowing up $\Bl_Z S \subset \PP(V^\vee)$ (cf. \cite[Prop. 3.2]{CT15}), which can be visualised using Kuznetsov's convention as
	\begin{align*}
	\Bl_Z S = \PP(\sI_Z) \quad \begin{ytableau}
	\none[\ \ D(Z)]
	&\none
	&&&&&&&&&&*(lightgray)
	&\none[\ \ \ \ \ \ D(Z)]
	\\
	\none[\ \ D(S)]
	&\none
&&*(lightgray)&*(lightgray)&*(lightgray)&*(lightgray)&*(lightgray)&*(lightgray)&*(lightgray)&*(lightgray)&*(lightgray)
	&\none[\ \ \ \ \ \  D(S)  ]
	\end{ytableau} \qquad \quad \shH_s = \PP(\sL).
	\end{align*}
If $N =k$, then theorem implies $S_\sigma^+ = \PP(\sL) \subset \PP(V)$ is homological projective dual to $\Bl_{S_\sigma^-} \PP(K^\vee) \to \PP(V^\vee)$, the blowing up along a (in general) {different} resolution $S_\sigma^-=Z$ of singularities of the degeneracy locus $S_\sigma$.  If $k = N-1$, then $S_{\sigma} \subset S$ is a Cohen-Macaulay subscheme of codimension $2$, and $\PP(\sL)=\Bl_{S_\sigma} S$ is the blowing up along $S_{\sigma}$. The theorem states the HPD between the two blowing--ups $\Bl_{S_\sigma} S \subset \PP(V)$ and $\Bl_Z \PP(K^\vee)  \to \PP(V^\vee)$.

\subsubsection*{Proof of Thm. \ref{thm:duality} } 
The situation can be regarded as a relative situation of \cite{CT15, JL18Bl}, and a similar strategy can be applied. Apply the construction of \S \ref{sec:Cayley} to the scheme $X=\PP(K^\vee)$ and the zero locus  $i: \tX \hookrightarrow \PP(K)$ of the canonical regular section of vector bundle $\sE = V \boxtimes \sO_{\PP(K^\vee)}(1)$, then the generalized universal hyperplane $\iota \colon \shH: = \shH_s \subset \PP(V) \times_S \PP(K^\vee)$ is a divisor of the line bundle $\sO_{\PP(V)}(1) \boxtimes \sO_{\PP(K^\vee)}(1)$. Consider the blowing up $\beta \colon \widetilde{\PP}(K^\vee) \to \PP(K^\vee)$. Its exceptional divisor is given by $\PP(\sN_{i}^\vee) =\PP(V^\vee) \times_S \tX$. Apply the geometry of  \S \ref{sec:BlvsCayley}, we get that the blowing up  $\gamma: \sU \to \shH$ of $\shH$ along $j \colon \PP(V) \times_S Z \hookrightarrow \shH$ is the universal hyperplane for $\widetilde{\PP}(K^\vee) \to \PP(V)$, i.e. $\iota_{\sU} :  \sU = \shH_{\widetilde{\PP}(K^\vee)} \hookrightarrow  \widetilde{\PP}(K^\vee)  \times_S \PP(V),$
and the exceptional locus of $\gamma$ is $j_Q \colon Q_Z \hookrightarrow \sU$, where $Q_Z \subset \PP(V) \times_S \PP(V^\vee) \times_S \tX$ is the base-change of the universal quadric $Q \subset  \PP(V) \times_S \PP(V^\vee)$ along map $Z \to S$. The situation is summarized in the following diagram, with notation of maps as indicated:

\begin{equation} \label{diagram:hilb}
\begin{tikzcd}[back line/.style={}, column sep = 1 em]
& \PP(V) \times_S \tX  \ar[back line]{dd}[near start,swap]{p} \ar[hook]{rr}{j}
  & & \shH \ar{dd}[near start]{\pi_{\shH}} \ar[hook]{rr}{\iota} & & \PP(V) \times_S \PP(K^\vee) \ar[bend left]{lldd}{q}
  \\
Q_Z \ar{dd}[swap]{\check{\pi}_Q} \ar{ur}{\pi_Q}
  & &  \sU  \ar{ur}{\gamma}  \ar[crossing over, <-right hook]{ll}[near start, swap]{j_Q}  \ar[crossing over, hook]{rr}{\iota_\sU~~} & & \PP(V) \times_S \widetilde{\PP}(K^\vee) \ar{ur}{\widetilde{\beta}} \ar[crossing over,bend left]{lldd}{\widetilde{q}}
  \\
& \tX \ar[back line, hook]{rr}[near start]{i}
  & & \PP(K^\vee)
   \\
\PP(V^\vee) \times_S \tX \ar{ur}{\check{p}} \ar[hook]{rr}{\check{j}} & & \widetilde{\PP}(K^\vee) \ar[crossing over]{ur}{\beta} \ar[crossing over, leftarrow]{uu}[near end]{\pi_\sU} 
\end{tikzcd}
\end{equation}
\medskip

In the rest of the proof we will write derived functors as {\em underived}, for simplicity of notations. From blowing up formula for $\gamma:\sU \to \shH$, we have 
	\begin{equation}\label{sod:sU'}
	D(\sU) = \big \langle \gamma^* D(\shH), ~ D(\PP(V) \times_S \tX)_0, \ldots, D(\PP(V) \times_S \tX)_{N-3} \big \rangle.
	\end{equation}
where $D(\PP(V) \times_S \tX)_{k} = j_{Q*}\, \pi_Q^* \, D(\PP(V) \times_S \tX) \otimes \sO(-k E_Q)$ (where $Q_Z$ is the exceptional divisor and $E_Q$ denotes the divisor class of $Q_Z$).  It follows directly from $\sO(-E_Q) = \sO_{\PP(V^\vee)}(1) \otimes \sO_{\PP(K^\vee)}(-1)$ and the diagram that
	$$D(\PP(V) \times_S \tX)_k =  D(\tX)_k \boxtimes_S \PP(V)|_\sU.$$ 
On the other hand, as observed in \cite{JL17}, $\shH$ is also the generalized universal hyperplane for the scheme $X_1 = \PP(V)$ and the zero locus $i_1 \colon \PP(\sL) \hookrightarrow \PP(V)$ of a canonical section of the vector bundle $\sE_1 = K^\vee \otimes \sO_{\PP(V)}(1)$. Denote $\pi_1 \colon \shH \to \PP(V)$ the projection, $j_1 \colon \PP_{\PP(\sL)} (\sN_{j_1}) \hookrightarrow \shH$ the inclusion and $p_1: \PP_{\PP(\sL)} (\sN_{j_1})  \to \PP(\sL)$ the projection. Then by Thm. \ref{thm:HPDI},
	\begin{equation}\label{sod:H'}
	D(\shH)  = \big \langle \Phi_1 ( D(\PP(\sC_\sigma)), ~ \pi_1^*D(\PP(V)) \otimes \sO_{\PP(K^\vee)}, \ldots, \pi_1^* D(\PP(V)) \otimes \sO_{\PP(K^\vee)}(k-2) \big \rangle,\\
	\end{equation}
where $\Phi_1 = j_{1*} \, p_1^*(-) \otimes \sO_{\PP(K^\vee)}(-1)$. From diagram (\ref{diagram:hilb}) we have
	\begin{equation}\label{eqn:mutated}\gamma^* ( \pi_1^* D(\PP(V)) \otimes \sO_{\PP(K^\vee)}(k)) = (\pi^* D(S) \otimes \sO_{\PP(K^\vee)} (k))\boxtimes_S\PP(V)|_\sU.\end{equation}
By Lem. \ref{lem:mut:Bl}, each time one right mutates (\ref{eqn:mutated}) passing through some $D(\PP(V) \times_S \tX)_{k'}$ inside (\ref{sod:sU'}) will result in tensoring (\ref{eqn:mutated}) with $\sO(-E_Q)$ and thus gets
	$$\big(\pi^* D(S) \otimes \sO_{\PP(K^\vee)} (k-1) \otimes  \sO_{\PP(V^\vee)}(1)\big)\boxtimes_S D(\PP(V))|_\sU.$$
Repeating this process of mutations inside (\ref{sod:sU'}) and substitute the category $D(\shH)$ by (\ref{sod:H'}), we end up with the following SOD:
	\begin{equation*} \label{sod:sU}
	D(\sU) = \big \langle \Psi \, ( D(\PP(\sC_\sigma))) , (\shA_1(1) \boxtimes_S D(\PP(V)) )|_\sU, \ldots, (\shA_{r-2}(r-2) \boxtimes_S D(\PP(V)) )|_\sU \big \rangle,
	\end{equation*}
where $\shA_i$'s are given by Lem. \ref{lem:lef:Bl}, and $\Psi \, = \LL \gamma^* \RR j_{1*} \LL p_1^*(-) \otimes \sO_{\PP(V^\vee)}(1) \otimes \sO_{\PP(K^\vee)}(-1): D(\PP(\sL)) \hookrightarrow D(\sU)$. By definition of HPD (Def. \ref{def:HPD}), we are done. \hfill $\square$			

\subsubsection*{Proof of Thm. \ref{thm:HPD}}  Apply the categorical Pl\"ucker formula of \cite{JLX17} to the two HPD pairs $(\shA/\PP(V^\vee), \shA^\hpd/\PP(V))$ and $(\PP(\sL) \subset \PP(V^\vee), \PP(\sL)^\perp:=\widetilde{\PP}(K^\vee) \to \PP(V))$, then the theorem  \ref{thm:HPD} immediately follows. \footnote{Notice that one could also apply the nonlinear HPD theorem of \cite{KP18,JL18join} to our theorem \ref{thm:duality} and obtain similar results in a slightly different formulation.}
\hfill $\square$			

\newpage

\end{document}